\icmltitlerunning{Deep Neural Networks with Binary Activation Functions}
\newtheorem{theorem}{Theorem}
\newtheorem{remark}[theorem]{Remark}
\newtheorem{lemma}[theorem]{Lemma}
\def\R{\mathbb{R}}
\begin{document}
%
%

\twocolumn[
\icmltitle{An Integer Programming Approach to Deep Neural Networks\\ with Binary Activation Functions}



\icmlsetsymbol{equal}{*}

\begin{icmlauthorlist}
	\icmlauthor{Jannis Kurtz}{ed}
	\icmlauthor{Bubacarr Bah}{to,goo}
\end{icmlauthorlist}

\icmlaffiliation{ed}{Chair for Mathematics of Information Processing, RWTH Aachen University, Aachen, Germany}
\icmlaffiliation{to}{African Institute for Mathematical Sciences, Cape Town, South Africa}
\icmlaffiliation{goo}{Division of Applied Mathematics, Stellenbosch University, Stellenbosch, South Africa}

\icmlcorrespondingauthor{Jannis Kurtz}{kurtz@mathc.rwth-aachen.de}

\icmlkeywords{Discrete Neural Networks, Binary Activation Functions, Adversarial Attacks}

\vskip 0.3in
]



\printAffiliationsAndNotice{\icmlEqualContribution} 

\begin{abstract}
We study deep neural networks with binary activation functions (BDNN), i.e. the activation function only has two states. We show that the BDNN can be reformulated as a mixed-integer linear program which can be solved to global optimality by classical integer programming solvers. Additionally, a heuristic solution algorithm is presented and we study the model under data uncertainty, applying a two-stage robust optimization approach. We implemented our methods on random and real datasets and show that the heuristic version of the BDNN outperforms classical deep neural networks on the Breast Cancer Wisconsin dataset while performing worse on random data.
\end{abstract}

\section{Introduction}
Deep learning (DL) methods have of-late reinvigorated interest in artificial intelligence and data science, and they have had many successful applications in computer vision, natural language processing, and data analytics \cite{lecun2015deep}. The training of deep neural networks relies mostly on (stochastic) gradient descent, hence the use of differentiable activation functions like ReLU, sigmoid or the hyperbolic tangent is the state of the art \cite{rumelhart1986learning,goodfellow2016deep}. On the contrary discrete activations, which may be more analogous to biological activations, present a training challenge due to non-differentiability and even discontinuity. If additionally the weights are considered to be binary, the use of discrete activation networks can reduce the computation and storage complexities, provides for better interpretation of solutions, and has the potential to be more robust to adversarial perturbations than the continuous activation networks \cite{qin2020binary}. Furthermore low-powered computations may benefit from discrete activations as a form of coarse quantizations \cite{plagianakos2001training,bengio2013estimating,courbariaux2015binaryconnect,rastegari2016xnor}. Nevertheless, gradient descent-based training behaves like a black box, raising a lot of questions regarding the explainability and interpretability of internal representations \cite{hampson1990representing,plagianakos2001training,bengio2013estimating}.  

On the other hand, integer programming (IP) is known as a powerful tool to model a huge class of real-world optimization problems \cite{wolsey1998integer}. Recently it was successfully applied to machine learning problems involving sparsity constraints and to evaluate trained neural networks \cite{bertsimas2017sparse,bertsimas2019sparse,fischetti2018deep}.

\paragraph{Contributions:}
In Section \ref{sec:DNN}, we reformulate the binary neural network as an IP formulation, referred to in the sequel as a binary deep neural network (BDNN), which can be solved to global optimality by classical IP solvers. All our results can also be applied to BDNNs with binary weights. Solving the latter class of networks to optimality was still an open problem. We note that it is straight forward to extend the IP formulation to more general settings and several variations of the model. We present a heuristic algorithm to calculate local optimal solutions of the BDNN in Section \ref{sec:heuristic} and apply a two-stage robust optimization approach to protect the model against adversarial attacks in Section \ref{sec:adversarialAttacks}. In Section \ref{sec:computations} we present computational results for random and real datasets and compare the BDNN to a deep neural network using ReLU activations (DNN). Despite scalability issues and a slightly worse accuracy on random datasets, the results indicate that the heuristic version outperforms the DNN on the \textit{Breast Cancer Wisconsin} dataset. 

\section{Related Literature}
The interest in BDNN goes back to \cite{mcculloch1943logical} where BDNNs were used to simulate Boolean functions. However, until the beginning of this century concerted efforts were made to train these networks either by specific schemes \cite{gray1992training,kohut2004boolean} or via back propagation by modifications of the gradient descent method \cite{widrow1988neural,toms1990training,barlett1992using,goodman1994learning,corwin1994iterative,plagianakos2001training}. More recent work regarding the back propagation, mostly motivated by the low complexity of computation and storage, build-on the pioneering works of \cite{bengio2013estimating,courbariaux2015binaryconnect,hubara2016binarized,rastegari2016xnor,kim2016bitwise}; see \cite{qin2020binary} for a detailed survey. Regarding the generalization error of BDNN, it was already proved that the VC-Dimension of deep neural networks with binary activation functions is $\rho\log (\rho)$, where $\rho$ is the number of weights of the BDNN; see \cite{baum1989size,maass1994perspectives,sakurai1993tighter}.

One the other hand IP methods were successfully applied to sparse classification problems \cite{bertsimas2017sparse}, sparse regression \cite{bertsimas2019sparse} and to evaluate trained neural networks \cite{fischetti2018deep}. In \cite{icarte2019training} BDNNs with weights restricted to $\{-1,1\}$ are trained by a hybrid method based on constraint programming and mixed-integer programming. In \cite{khalil2018combinatorial}  the authors calculate optimal adversarial examples for BDNNs using a MIP formulation and integer propagation. Furthermore, robust optimization approaches were used to protect against adversarial attacks for other machine learning methods \cite{xu2009robustness,xu2009robust,bertsimas2019robust}.

\section{Discrete Neural Networks}\label{sec:DNN}
In this work we study \textit{binary deep neural networks} (BDNN), i.e. classical deep neural networks with binary activation functions. As in the classical framework, for a given input vector $\mathop{x\in \R^n}$ we study classification functions $f$ of the form $\mathop{f(x)=\sigma^K\left( W^K\sigma^{K-1}\left( W^{K-1}\ldots ,\sigma^1\left(W^1x\right)\ldots \right)\right)}$ for weight matrices $W^k\in \R^{d_{k}\times d_{k-1}}$ and activation functions $\sigma^k$, which are applied component-wise. The dimension $d_{k}$ is called the \textit{width} of the \textit{$k$-th layer} . In contrast to the recent developments of the field we consider the activation functions to be binary, more precisely each function is of the form
\begin{equation}\label{eq:defActivationFunction}
\sigma^k(\alpha ) = \begin{cases}
0 & \text{ if } \alpha < \lambda_k \\
1 & \text{ otherwise}
\end{cases} 
\end{equation}

for $\alpha\in \R$ where the parameters $\lambda_k\in \R$ can be learned by our model simultaneously with the weight matrices which is normally not the case in the classical neural network approaches. Note that it is also possible to fix the values $\lambda_k$ in advance.

In the following we use the notation $[p]:=\left\{ 1,\ldots ,p\right\}$ for $p\in \mathbb N$.

Given a set of labeled training samples \[X\times Y = \left\{ (x^i,y^i) \ | \ i\in [m]\right\}\subset \R^n\times \left\{ 0,1\right\}\] we consider loss functions
\begin{equation}\label{eq:defLossFunction}
\ell: \left\{ 0,1 \right\} \times \R^{d_K} \to \R
\end{equation}
and the task is to find the optimal weight matrices which minimize the empirical loss over the training samples, i.e. we want to solve the problem
\begin{equation}\label{eq:DNNDefinition}
\begin{aligned}
\min \ & \sum_{i=1}^{m} \ell\left( y^i , z^i\right) \\
s.t. \quad & z^i = \sigma^K\left( W^K\sigma^{K-1}\left(\ldots \sigma^1\left(W^1x^i\right)\ldots \right)\right) \ \ \forall i\in [m] \\
& W^k\in\R^{d_{k}\times d_{k-1}} \ \ \forall k\in[K] \\
& \lambda_k\in \R \ \ \forall k\in [K]
\end{aligned}
\end{equation}
for given dimensions $d_0,\ldots d_{K}$ where $d_0=n$. We set $d_{K}=:L$. All results in this work even hold for \textit{regression problems}, more precisely for loss functions
\[
\ell_r: \R^L \times \R^L \to \R
\]
where we minimize the empirical loss $\sum_{i=1}^{m} \ell_r\left( x^i , z^i\right)$ instead. We use labels $\left\{ 0,1\right\}$ instead of the classical labels $\left\{ -1, 1\right\}$ due to ease of notation in our model. Nevertheless the same approach can be adapted to labels $\tilde y^i \in \left\{ -1, 1\right\}$ by adding the additional constraints
\[
\tilde y^i = -1 + 2y^i.
\]

Several variants of the model can be considered:
\begin{itemize}
	\item The output of the neural network is $0$ or $1$, defining the class the input is assigned to. As a loss function, we count the number of misclassified training samples. This variant can be modeled by choosing the last layer to have width $d_{K} = 1$ together with the loss function $\ell (y , z ) = |y - z|$.
	
	\item The output of the neural network is a continuous vector $z\in \R^L$. This case can be modeled by choosing the activation function of the last layer $\sigma^K$ as the identity and $d_K=L$. Any classical loss functions maybe consider, e.g. the squared loss $\ell (x, z ) = \| y - z\|_2^2$ .
	Note that a special case of this model is the \textit{autoencoder} where in contrast to the continuous framework the input is encoded into a vector with $0$-$1$ entries.

	\item All results can easily be generalized to multiclass classification; see Section \ref{sec:DNN}.
\end{itemize} 

In the following lemma we show how to reformulate Problem \eqref{eq:DNNDefinition} as an integer program.
\begin{lemma}\label{lem:MINLP_formulation}
Assume the euclidean norm of each data point in $X$ is bounded by $r$, then Problem \eqref{eq:DNNDefinition} is equivalent to the mixed-integer non-linear program
\begin{align}
& \min \ ~\sum_{i=1}^{m} \ell\left( y^i , u^{i,K}\right) \quad s.t. \ \  \label{eq:MINLP_objective}\\
& W^1 x^i < M_1 u^{i,1} + \lambda_1  \label{eq:MINLP_firstlayer}\\
& W^1 x^i \ge M_1 (u^{i,1}-1) + \lambda_1 \label{eq:MINLP_firstlayer2}\\
& W^k u^{i,k-1} < M_k u^{i,k} + \lambda_k  \ \ \forall k\in [K]\setminus \{1\}\label{eq:MINLP_layers}\\
& W^k u^{i,k-1} \ge M_k(u^{i,k}-1) + \lambda_k  \ \ \forall k\in [K]\setminus \{1\}\label{eq:MINLP_layers2}\\
& W^k \in[-1,1]^{d_k\times d_{k-1}}, \ \ \lambda_k\in[-1,1] \ \ \forall k\in [K]\\ 
& u^{i,k}\in \left\{ 0,1\right\}^{d_k} \ \ \forall k\in [K], i\in [m],\label{eq:MINLP_variables}
\end{align}
where $M_1:=(nr+1)$ and $M_k:=(d_{k-1}+1)$.
\end{lemma}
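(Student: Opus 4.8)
The plan is to prove the equivalence in two conceptual moves: first reducing the unbounded problem \eqref{eq:DNNDefinition} to one with normalized parameters, and then encoding each binary activation by a pair of big-$M$ inequalities. The central observation driving the first move is that the threshold activation \eqref{eq:defActivationFunction} is invariant under positive scaling of a layer's parameters: for any $c_k>0$ one has $(c_k W^k v)_j \ge c_k\lambda_k \iff (W^k v)_j \ge \lambda_k$, so replacing $(W^k,\lambda_k)$ by $(c_k W^k, c_k\lambda_k)$ leaves $\sigma^k$ unchanged. Since the input fed into layer $k$ is the binary output $u^{i,k-1}$ of the previous layer, which itself does not depend on $W^k$, this rescaling can be carried out independently in each layer without altering any network output $z^i$. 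Choosing $c_k = 1/\max\{\max_{j,l}|W^k_{jl}|,\,|\lambda_k|\}$ (and $c_k=1$ if this maximum is zero) normalizes every weight and threshold into $[-1,1]$. Hence restricting to $W^k\in[-1,1]^{d_k\times d_{k-1}}$ and $\lambda_k\in[-1,1]$, as in the MINLP, is without loss of generality and preserves the optimal value.

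For the second move I would introduce, for each sample $i$ and layer $k$, a binary vector $u^{i,k}\in\{0,1\}^{d_k}$ representing $\sigma^k(W^k u^{i,k-1})$, with the convention $u^{i,0}=x^i$ and $u^{i,K}=z^i$. The claim is that \eqref{eq:MINLP_firstlayer}--\eqref{eq:MINLP_layers2} force exactly $u^{i,k}_j=\sigma^k\big((W^k u^{i,k-1})_j\big)$. I would verify this by the standard big-$M$ case distinction on the value of $u^{i,k}_j$: if $u^{i,k}_j=0$ then \eqref{eq:MINLP_firstlayer}/\eqref{eq:MINLP_layers} reduce to $(W^k u^{i,k-1})_j<\lambda_k$ while \eqref{eq:MINLP_firstlayer2}/\eqref{eq:MINLP_layers2} become $(W^k u^{i,k-1})_j\ge\lambda_k-M_k$; if $u^{i,k}_j=1$ the roles are reversed, giving $(W^k u^{i,k-1})_j\ge\lambda_k$ together with the upper bound $(W^k u^{i,k-1})_j<\lambda_k+M_k$. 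In each case one inequality is exactly the defining condition of \eqref{eq:defActivationFunction} and the other must be shown to be vacuous.

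It therefore remains to check that $M_k$ is large enough to render the second inequality non-binding, which is where the stated values enter. For the first layer, using $|W^1_{jl}|\le 1$ and $|x^i_l|\le\|x^i\|_2\le r$ gives $|(W^1x^i)_j|\le\sum_{l=1}^{n}|x^i_l|\le nr$, so with $\lambda_1\in[-1,1]$ the quantities $\lambda_1-M_1\le -nr$ and $\lambda_1+M_1\ge nr$ dominate the pre-activation, confirming $M_1=nr+1$. For $k\ge 2$, since $u^{i,k-1}\in\{0,1\}^{d_{k-1}}$ we get $|(W^k u^{i,k-1})_j|\le\sum_{l=1}^{d_{k-1}}|W^k_{jl}|\le d_{k-1}$, so $M_k=d_{k-1}+1$ suffices by the same reasoning. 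Combining the two moves, every solution of \eqref{eq:DNNDefinition} yields, after normalization, a feasible point of the MINLP with identical objective $\sum_i\ell(y^i,u^{i,K})=\sum_i\ell(y^i,z^i)$, and conversely any MINLP solution induces weights realizing the same outputs; this establishes the equivalence.

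I expect the main obstacle to be the first move: one must argue carefully that the per-layer rescaling is genuinely independent across layers and does not propagate, i.e. that only the binary outputs, never the real-valued pre-activations, are passed between layers, and that the threshold $\lambda_k$ is rescaled in lockstep with $W^k$. The big-$M$ encoding itself is routine once the bounds on the pre-activations are in hand, modulo a minor check of strict versus non-strict inequalities at the boundary.
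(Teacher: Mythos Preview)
Your proposal is correct and follows essentially the same two-step approach as the paper: first normalize each layer's parameters into $[-1,1]$ via positive rescaling (which leaves the binary activations unchanged), then verify that the big-$M$ constraints encode the threshold activation using the bounds $|(W^1x^i)_j|\le nr$ and $|(W^k u^{i,k-1})_j|\le d_{k-1}$. Your treatment is in fact slightly more careful than the paper's (you correctly divide by the maximum rather than multiply, and you explicitly flag the strict-inequality boundary issue).
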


\begin{proof}
First we show that, due to the binary activation functions, we may assume $W^k\in[-1,1]^{d_k\times d_{k-1}}$ and $\lambda_k\in [-1,1]$ for all $k\in [K]$ in our model. To prove this assume we have any given solution $W^1,\ldots , W^K$ and corresponding $\lambda_1,\ldots ,\lambda_K$ of problem \eqref{eq:DNNDefinition} with arbitrary values in $\R$. Consider any fixed layer $k\in [K]$. The $k$-th layer receives a vector $h^{k-1}\in\{ 0, 1\}^{d_{k-1}}$ from the previous layer, which is applied to $W^k$ and afterwards the activation function $\sigma^k$ is applied component-wise, i.e. the output of the $k$-th layer is a vector
\[
h^k_j = \begin{cases} 0 & \text{ if } (w_j^k)^\top h^{k-1}<\lambda_k \\ 1 & \text{ otherwise}\end{cases}
\]
where $w_j^k$ is the $j$-th row of the matrix $W^k$. Set 
\[\beta:=\max\{|\lambda_k|,\max_{\substack{j=1,\ldots ,d_k\\ l=1,\ldots ,d_{k-1}}}|w_{jl}^k|\}\]
and define $\tilde W^k:=\beta W^k$ and $\tilde \lambda_k:=\beta\lambda_k$. Then replacing $W^k$ by $\tilde W^k$ and $\lambda_k$ by $\tilde \lambda_k$ in the $k$-th layer yields the same output vector $h^k$, since the inequality $ (w_j^k)^\top h^{k-1}<\lambda_k$ holds if and only if the inequality $ (\tilde w_j^k)^\top h^{k-1}<\tilde \lambda_k$ holds. Furthermore all entries of $\tilde W^k$ and $\tilde \lambda_k$ have values in $[-1,1]$.

Next we show that the constraints \eqref{eq:MINLP_firstlayer}--\eqref{eq:MINLP_layers2} correctly model the equation 
\begin{align}
z^i	:= & ~u^{i,K} \nonumber\\
	= & ~\sigma^K\left( W^K\sigma^{K-1}\left( W^{K-1}\ldots ,\sigma^1\left(W^1x^i\right)\ldots \right)\right) \nonumber
\end{align}
of Problem \eqref{eq:DNNDefinition}. The main idea is that the $u^{i,k}$-variables model the output of the activation functions of data point $i$ in layer $k$, i.e. they have value $0$ if the activation value is $0$ or value $1$ otherwise. More precisely for any solution $W^1, \ldots, W^k$ and $\lambda_1,\ldots ,\lambda^k$ of the Problem in Lemma \ref{lem:MINLP_formulation} the variable $u_j^{i,1}$ is equal to $1$ if and only if $(w_j^1)^\top x^i\ge \lambda_1$ since otherwise Constraint \eqref{eq:MINLP_firstlayer2} would be violated. Note that if $u_j^{i,1}=1$, then Constraint \eqref{eq:MINLP_firstlayer} is always satisfied since all entries of $W^1$ are in $[-1,1]$ and all entries of $x^i$ are in $[-r,r]$ and therefore $|W^1 x^i|\le nr < M_1$. Similarly we can show that $u_j^{i,1}=0$ if and only if $(w_j^1)^\top x^i < \lambda_1$. Hence $u^{i,1}$ is the output of the first layer for data point $x^i$ which is applied to $W^2$ in Constraints \eqref{eq:MINLP_layers} and \eqref{eq:MINLP_layers2}. By the same reasoning applied to Constraints \eqref{eq:MINLP_layers} and \eqref{eq:MINLP_layers2} we can show that $u^{i,k}$ is equal to the output of the $k$-th layer for data point $x^i$ for each $k\in [K]\setminus \{ 1\}$. Note that instead of the value $nr$ we can use $d_{k-1}$ here since the entries of $u^{i,k-1}$ can only have values $0$ or $1$ and the dimension of the rows of $W^k$ is $d_{k-1}$.
\end{proof}

The formulation in Lemma \ref{lem:MINLP_formulation} is a non-linear mixed-integer programming (MINLP) formulation, since it contains products of variables, where each is a product of a continuous variable and an integer variable. By linearizing these products we can prove the following theorem.
\begin{theorem}\label{thm:MILP_formulation}
Under the assumption of Lemma \ref{lem:MINLP_formulation} Problem \eqref{eq:DNNDefinition} is equivalent to the mixed-integer program
\begin{equation*}\label{eq:MIP_formulation}
\begin{aligned}
(\text{MIP}&): ~\min  \sum\limits_{i=1}^{m} \ell\left( y^i , u^{i,K}\right) \quad s.t. \\
& W^1 x^i < M_1 u^{i,1} + \lambda_1 \quad  \forall i\in [m]\\
& W^1 x^i \ge M_1 (u^{i,1}-1) + \lambda_1 \quad  \forall i\in [m]\\
&\sum_{l=1}^{d_{k-1}}s_l^{i,k} < M_k u^{i,k} + \lambda_k \\
& \qquad\forall i\in [m], k\in [K]\setminus \{1\}\\
&\sum_{l=1}^{d_{k-1}}s_l^{i,k} \ge M_k (u^{i,k}-1) + \lambda_k \\
& \qquad \ \forall i\in [m], k\in [K]\setminus \{1\}\\
&s_{lj}^{i,k} \le u_j^{i,k}, \quad s_{lj}^{i,k} \ge -u_j^{i,k} \\
&\qquad \forall i\in [m], k\in [K]\setminus\{ 1\}, l\in [d_{k-1}], j\in [d_k] \\
&s_{lj}^{i,k} \le w_{lj}^k + (1-u_j^{i,k}) \\
& \qquad \forall i\in [m], k\in [K]\setminus\{ 1\}, l\in [d_{k-1}], j\in [d_k]\\
&s_{lj}^{i,k} \ge w_{lj}^k - (1-u_j^{i,k}) \\
& \qquad \forall i\in [m], k\in [K]\setminus\{ 1\}, l\in [d_{k-1}], j\in [d_k]\\
& W^k\in[-1,1]^{d_k\times d_{k-1}} \quad \forall k\in [K]\\
& \lambda_k\in [-1,1] \quad \forall k\in [K]\\
& u^{i,k}\in \left\{ 0,1\right\}^{d_k} \ \ \forall k\in [K], i\in [m]\\
& s_l^{i,k}\in [-1,1]^{d_k} \ \ \forall i\in [m], k\in [K]\setminus\{ 1\}, l\in [d_{k-1}]
\end{aligned}
\end{equation*}
\end{theorem}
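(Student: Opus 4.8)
The plan is to start from the MINLP of Lemma~\ref{lem:MINLP_formulation} and observe that the only source of nonlinearity lies in the terms appearing in constraints~\eqref{eq:MINLP_layers} and~\eqref{eq:MINLP_layers2}: for each layer $k\ge 2$ the product $W^k u^{i,k-1}$ couples the continuous weight entries $w_{lj}^k\in[-1,1]$ with the binary activation variables of the preceding layer, $u_l^{i,k-1}\in\{0,1\}$. The first-layer term $W^1 x^i$ is already linear, since $x^i$ is fixed data, which is why the MIP retains the first-layer constraints unchanged. The entire task therefore reduces to replacing each scalar product of a bounded continuous variable with a binary variable by an exact linear reformulation.

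First I would isolate a single product $w\cdot u$ with $w\in[-1,1]$ and $u\in\{0,1\}$, introduce an auxiliary variable $s$ intended to represent it, and impose the four inequalities
\[
s \le u, \quad s \ge -u, \quad s \le w + (1-u), \quad s \ge w - (1-u),
\]
which are exactly the McCormick envelope inequalities specialized to the bounds $w\in[-1,1]$ and $u\in[0,1]$. The key step, and the only place where something genuinely must be checked, is that these inequalities force $s = w u$ at every integer-feasible point. This is a short case distinction: when $u=0$ the first two inequalities give $s\le 0$ and $s\ge 0$, hence $s=0=w\cdot 0$, while the last two are slack; when $u=1$ the last two give $s\le w$ and $s\ge w$, hence $s=w=w\cdot 1$, while the first two are slack. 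Thus the linearization is exact precisely because $u$ is binary, as guaranteed by~\eqref{eq:MINLP_variables}.

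With this building block in hand, I would apply the substitution componentwise: for each layer $k\ge 2$, each sample $i$, and each index pair, introduce $s_{lj}^{i,k}$ to represent the product of the corresponding weight entry with the relevant binary activation, together with its four envelope inequalities, and replace each component of $W^k u^{i,k-1}$ in~\eqref{eq:MINLP_layers}--\eqref{eq:MINLP_layers2} by the linear sum $\sum_{l=1}^{d_{k-1}} s_{lj}^{i,k}$. This yields exactly the constraint system of (MIP). Since the objective $\sum_{i} \ell(y^i, u^{i,K})$ depends only on the binary output variables $u^{i,K}$, which the substitution leaves untouched, the reformulation preserves the objective value.

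Finally I would establish equivalence in both directions. Given any feasible point of the MINLP, setting each $s_{lj}^{i,k}$ equal to the product it represents produces a feasible point of (MIP) with identical objective; conversely, by the exactness verified above, every feasible point of (MIP) satisfies $s_{lj}^{i,k}=w_{lj}^k u_l^{i,k-1}$, so projecting out the $s$-variables returns a feasible MINLP point with the same objective. Chaining this with the equivalence already proved in Lemma~\ref{lem:MINLP_formulation} gives the claimed equivalence between (MIP) and Problem~\eqref{eq:DNNDefinition}. I do not expect a real obstacle here: the only substantive point is the exactness of the product linearization, and everything else is bookkeeping over the layer, sample, and index ranges.
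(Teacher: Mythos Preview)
Your proposal is correct and follows essentially the same approach as the paper: introduce auxiliary variables $s_{lj}^{i,k}$ for each bilinear term in $W^k u^{i,k-1}$, impose the four McCormick-type inequalities, and verify exactness by the two-case analysis on the binary variable. Your write-up is in fact more explicit than the paper's own proof in spelling out both directions of the equivalence and in chaining back through Lemma~\ref{lem:MINLP_formulation}.
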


\begin{proof}
We replace each product of variables $w_{lj}^k u_j^{i,k-1}$ in the formulation of Lemma \eqref{lem:MINLP_formulation}  by a new variable $\mathop{s_{lj}^{i,k}\in [-1,1]}$. To ensure that 
\[
w_{lj}^k u_j^{i,k-1} = s_{lj}^{i,k}
\]
holds, we have to add the set of inequalities
\begin{align*}
& s_{lj}^{i,k} \le u_j^{i,k} \\
& s_{lj}^{i,k} \ge -u_j^{i,k}\\
& s_{lj}^{i,k} \le w_{lj}^k + (1-u_j^{i,k})\\
& s_{lj}^{i,k} \ge w_{lj}^k - (1-u_j^{i,k}) .
\end{align*}
Note that if $u_j^{i,k}=0$, then the first two constraints ensure that $s_{lj}^{i,k}=0$. Since $w_{lj}^k\in [-1,1]$ this combination is also feasible for the last two constraints. If $u_j^{i,k}=1$, then the last two constraints ensure, that $s_{lj}^{i,k}=w_{lj}^k$, while $s_{lj}^{i,k}$ and $u_j^{i,k}$ are still feasible for the first two constraints.
\end{proof}

\begin{remark}
The regression variant can be modeled by replacing the $u^{i,K}$ variables by continuous variables $\mathop{z^i\in \R^{d_K}}$ for each $i\in [m]$, replacing the objective function \eqref{eq:MINLP_objective} by 
\[
\sum_{i=1}^{m} \ell_r\left( x^i , z^i\right)
\] 
and replacing Constraints \eqref{eq:MINLP_layers}-\eqref{eq:MINLP_layers2} with $k=K$ by the constraints 
\[
z^i = W^K u^{i,k-1} .
\]
\end{remark}

\begin{remark}
Besides others the following classical loss functions can be considered:
\begin{itemize}
	\item To model the empirical error we simply define the loss function by
	\[
	\sum_{i=1}^{m}\mathbbm{1}_{\left\{z^i\neq y^i\right\}} = \sum_{i=1: y^i=0}^{m}z^i + \sum_{i=1: y^i=1}^{m}(1-z^i)
	\]
	In this case we obtain a mixed-integer linear problem (MILP) in Theorem \ref{thm:MILP_formulation}.
	\item Theorem \ref{thm:MILP_formulation} shows that the regression variant with quadratic loss $\ell_r (x, z ) = \| x - z\|_2^2$ can be modelled as a mixed-integer quadratic problem (MIQP).
\end{itemize}
\end{remark}

Problem (MIP) can be solved by any off-the-shelf solvers like Gurobi for many classical loss functions \cite{gurobi}. Unfortunately, the number of integer variables of the MIP formulation is $\mathcal O(DKm)$, where $D$ is the maximum dimension of the layers, and therefore grows linear with the number of training samples and with the number of layers. For practical applications requiring large training sets solving the MIP formulation can be a hard or even impossible task. To tackle these difficulties we propose a fast heuristic in Section \ref{sec:heuristic}. Despite the latter drawback the MIP formulation has a lot of advantages and can give further insights into the analysis of deep neural networks:

\begin{itemize}
	\item The BDNN with binary weights can be modeled by simply setting the weight variables to be in $\{ 0,1\}$. The linearization results of Theorem \eqref{thm:MILP_formulation} apply also in this case.
	
	\item  Multiclass classification tasks can be modeled by using integer variables $u^{i,K}_j\in \mathbb Z\cap [a,b]$ in the last layer.
	
	\item More general discrete activation functions of the form
	\[
	\sigma^k(\alpha) = 
	v \ \text{ if } \underline{\lambda_k^{v}}\le \alpha \le \overline{\lambda_k^v} , \ v\in V\subset\mathbb Z
	\]
	for a finite set $V$ and pairwise non-intersecting intervals $[\underline{\lambda_k^{v}},\overline{\lambda_k^v} ]$ can be modeled by adding copies $u^{i,k,v}$ of the $u^{i,k}$ variables for each $v\in V$ and adding the constraints
	\begin{align*}
	& W^k \left( \sum_{v\in V} vu^{i,k-1,v}\right) \le M_k \left(1-u^{i,k,v}\right) + \lambda_k^{v} \\
	& W^k \left( \sum_{v\in V} vu^{i,k-1,v}\right) \ge -M_k \left(1-u^{i,k,v}\right) + \lambda_k^{v}
	\end{align*}
	for each $v\in V$, $k\in [K]\setminus\{1\}$ and $i\in[m]$. The two constraints for the first layer are defined similarly, replacing $\left( \sum_{v\in V} vu^{i,k-1,v}\right)$ by $x^i$. Note that the values $\underline{\lambda_k^{v}},\overline{\lambda_k^v}$ either have to be fixed in advance for each $v\in V$ or additional constraints have to be added which ensure the interval structure.

	\item The MIP formulation can easily be adjusted for applications where further constraints are desired. E.g. sparsity constraints of the form
	\[
	\| W^k\|_0\le s
	\]
	can be easily added to the formulation. Here $\| W^k\|_0$ is the number of non-zero entries of $W^k$. 
	
	\item Any classical approaches handling uncertainty in the data can be applied to the MIP formulation. In Section \ref{sec:adversarialAttacks} we will apply a robust optimization approach to the MIP formulation.
	
	\item The model is very flexible regarding changes in the training set. To add new data points that were not yet considered we just have to add the corresponding variables and constraints for the new data points to our already existing model and restart a solver, which is based on the idea of online machine learning.
	
	\item Classical solvers like Gurobi use branch \& bound methods to solve MIP formulations. During these methods at each time the optimality gap, i.e. the percental difference between the best known upper and lower bound, is known. These methods can be stopped after a certain optimality gap is reached.
\end{itemize}

\subsection{Heuristic Algorithm}\label{sec:heuristic}
In this section, we present a heuristic algorithm that is based on local search applied to the non-linear formulation in Lemma \ref{lem:MINLP_formulation}. This method is also known under the name \textit{Mountain-Climbing method}; see \cite{nahapetyan2009bilinear}. The idea is to avoid the quadratic terms by alternately optimizing the MINLP formulation over a subset of the variables and afterward over the complement of variables. Since for given weight variables exactly one feasible solution for the $u$-variables exists, this procedure would terminate after one iteration if all $u$-variables are contained in one of the problems. To avoid this problem we go through all layers and alternately fix the $u$ or the $W$-variables. The second problem uses exactly the opposite variables for the fixation. More precisely we iteratively solve the following two problems: 
\begin{equation}\label{eq:HeuristicWProb}\tag{$H1$}
\begin{aligned}
\min \ & \sum_{i=1}^{m} \ell\left( y^i , u^{i,K}\right)  \\
s.t. \quad & W^1 x^i < M_1 u^{i,1} + \lambda_1 \ \ \forall i\in [m] \\
& W^1 x^i \ge M_1 (u^{i,1}-1) + \lambda_1 \ \ \forall i\in [m]\\
&W^k u^{i,k-1} < M_k u^{i,k} + \lambda_k \\ 
&\qquad\qquad\forall i\in [m], k\in [K]\setminus \{1\}\\
&W^k u^{i,k-1} \ge M_k(u^{i,k}-1) + \lambda_k \\ 
&\qquad\qquad\forall i\in [m], k\in [K]\setminus \{1\}\\
& W^k\in[-1,1]^{d_k\times d_{k-1}}, \lambda_k\in [-1,1] \\ 
&\qquad\qquad\forall k\in [K]: k \text{ odd}\\
&u^{i,k}\in \left\{ 0,1\right\}^{d_k} \\ 
&\qquad\qquad\forall i\in [m], \ k\in [K-1]: k \text{ odd} \\
& u^{i,K}\in \left\{ 0,1\right\}^{d_K} \ \ \forall i\in [m].
\end{aligned}
\end{equation}
and
\begin{equation}\label{eq:HeuristicUProb}\tag{$H2$}
\begin{aligned}
\min \ & \sum_{i=1}^{m} \ell\left( y^i , u^{i,K}\right)  \\
s.t. \quad & W^1 x^i < M_1 u^{i,1} + \lambda_1 \ \ \forall i\in [m] \\
& W^1 x^i \ge M_1 (u^{i,1}-1) + \lambda_1 \ \ \forall i\in [m]\\
&W^k u^{i,k-1} < M_ku^{i,k} + \lambda_k \\ 
&\qquad\qquad\forall i\in [m], k\in [K]\setminus \{1\}\\
&W^k u^{i,k-1} \ge M_k(u^{i,k}-1) + \lambda_k \\ 
&\qquad\qquad\forall i\in [m], k\in [K]\setminus \{1\}\\
& W^k\in[-1,1]^{d_k\times d_{k-1}}, \lambda_k\in [-1,1] \\ 
&\qquad\qquad\forall k\in [K]: k \text{ even}\\
&u^{i,k}\in \left\{ 0,1\right\}^{d_k} \\ 
&\qquad\qquad\forall i\in [m], \ k\in [K-1]: k \text{ even} \\
& u^{i,K}\in \left\{ 0,1\right\}^{d_K} \ \ \forall i\in [m].
\end{aligned}
\end{equation}
In Problem \eqref{eq:HeuristicWProb} all variables in 
\begin{equation*}
	V_{\text{fix}}^1:=\{W^k,\lambda_k, u^{i,k} \text{ where $k\neq K$ and $k$ is even}\}
\end{equation*}
are fixed to given values; while in Problem \eqref{eq:HeuristicUProb} all variables in 
\begin{equation*}
	V_{\text{fix}}^2:=\{W^k,\lambda_k, u^{i,k} \text{ where $k\neq K$ and $k$ is odd}\}
\end{equation*}
are fixed to given values. In the heuristic procedure the fixed variables are always set to the optimal value of the preceding problem. Note that both problems are linear mixed-integer problems with roughly half of the variables of Problem (MIP). The heuristic is shown in Algorithm \ref{alg:heuristic}. Note that Algorithm \ref{alg:heuristic} terminates after a finite number of steps, since there only exist finitely many possible variable assignments for the variables $u^{i,K}$, and therefore only finitely many objective values exist.

\begin{algorithm}\caption{(Local Search Heuristic)}\label{alg:heuristic}
\begin{algorithmic}
\STATE {\bfseries Input:} $X\times Y$, $K$, $d_0,\ldots ,d_K$
	\STATE {\bfseries Output:} Weights $W^1, \ldots, W^K$.
	\STATE Draw random values for the variables in $V_{\text{fix}}^1$
	\REPEAT 
	\STATE Calculate optimal solution of \eqref{eq:HeuristicWProb}, if feasible, for the current fixations in $V_{\text{fix}}^1$.
	\STATE Set the values of all variables in $V_{\text{fix}}^2$ to the corresponding optimal values of \eqref{eq:HeuristicWProb}.
	\STATE Calculate optimal solution of \eqref{eq:HeuristicUProb} if feasible for the current fixations in $V_{\text{fix}}^2$.
	\STATE Set the values of all variables in $V_{\text{fix}}^1$ to the corresponding optimal values of \eqref{eq:HeuristicUProb}.
	\UNTIL{no better solution is found} 
	\STATE Return: $W=\{ W^{k}\}_{k\in [K]}$
\end{algorithmic}
\end{algorithm}

\section{Data Uncertainty}\label{sec:adversarialAttacks}
In this section we consider labeled data \[X\times Y=\left\{ (x^i,y^i) \ | \ i\in [m]\right\}\] where the euclidean norm of each data point is bounded by $r>0$ and the data points are subject to uncertainty, i.e. a true data point $x^i\in\R^n$ can be disturbed by an unknown deviation vector $\delta\in\R^n$. One approach to tackle data uncertainty is based on the idea of robust optimization and was already studied for regression problems and support vector machines in \cite{bertsimas2019robust,xu2009robustness,xu2009robust}. In the robust optimization setting, we assume that for each data point $x^i$ we have a convex set of possible deviation vectors $U^i\subset\R^n$ called \textit{uncertainty set} which is defined by
\[
U^i = \left\{ \delta\in \R^n \ | \ \|\delta\|\le r_i\right\}
\]
for a given norm $\|\cdot \|$ and radii $r_1,\ldots ,r_m$. Note that classical convex sets like boxes, polyhedrons, or ellipsoids can be modeled as above by using the $\ell_1$ or $\ell_2$ norm. The task is to find the weights of a neural network which are robust against all possible data disturbances in the uncertainty set $U:=U^1\times \cdots\times U^m$, i.e. we want to find weights $W^1,\ldots,W^k$ which minimize the worst-case loss over the training set and which are feasible for all possible disturbances. The difference to the situations studied in \cite{bertsimas2019robust,xu2009robustness,xu2009robust} is that the variables $u^{i,k}$ in our model, which are representing the value of the activation function, should be determined after the disturbance is known since the disturbance influences the data point and therefore the value of the activation function. Hence we can interpret the decision as a function of the uncertain parameters, i.e. $u^{i,k}: U^i \to \{ 0,1\}$. A useful approach to tackle this situation is called two-stage robustness (or adaptive robustness) and was already studied intensively in the literature \cite{buchheim2018robust}. Applied to our model the two-stage robust counterpart is

\begin{multline}\label{eq:TwoStageRobustVersion}
\min_{W^k,\lambda_k} ~\max_{\delta\in U} ~\min_{u^{i,k}}\bigg{\{} \sum_{i=1}^{m} \ell\left( y^i , u^{i,K}\right): W^1,\ldots ,W^k, \\ \lambda_1,\ldots ,\lambda_k, u^{1,1},\ldots ,u^{m,K} \in P(X,\delta)\bigg{\}}.
\end{multline}
where $P(X,\delta)$ is the set of feasible solutions of the inequality system
\begin{align*}
& W^1 (x^i+\delta^i) <  M_1^i u^{i,1} + \lambda_1 \ \ \forall i\in [m] \\
& W^1 (x^i+\delta^i) \ge M_1^i (u^{i,1}-1) + \lambda_1 \ \ \forall i\in [m]\\
&W^k u^{i,k-1} < M_k u^{i,k} + \lambda_k \ \ \forall i\in [m], k\in [K]\setminus \{1\}\\
&W^k u^{i,k-1} \ge M_k(u^{i,k}-1) + \lambda_k \ \ \forall i\in [m], k\in [K]\setminus \{1\}\\
& W^k\in[-1,1]^{d_k\times d_{k-1}}, \lambda_k\in [-1,1] \ \ \forall k\in [K]\\
&u^{i,k}\in \left\{ 0,1\right\}^{d_k} \ \ \forall i\in [m], \ k\in [K] 
\end{align*}
and $M_1^i:= n(r+r_i)$. The only difference to Constraints \eqref{eq:MINLP_firstlayer} -- \eqref{eq:MINLP_variables} is the appearance of the disturbance vectors $\delta^i$ in the first layer.

The idea is that in the first-stage the variables $W^k$ and $\lambda^k$ have to be calculated before the disturbance $\delta$ is known. For each possible first-stage solution the worst-case objective value overall scenarios $(\delta^1,\ldots, \delta^m)\in U$ is considered in the objective function. In the second-stage, after the scenario is known, the best second-stage decision for the variables $u^{i,k}$ is made. Since for each combination of first-stage variables and scenarios in $U$ exactly one second-stage variable is feasible, the second-stage minimization problem is equivalent to just finding this unique feasible solution, i.e. checking for each neuron if it is activated under the given disturbance or not. Problems of this form can be solved by so-called \textit{column-and-constraint algorithms} \cite{zeng2013solving}. The idea of this method is to start with a finite subset of scenarios in $U$ and iteratively add new scenarios to the problem. The new scenario is determined by an adversarial problem. This scenario is then added to the master-problem together with a copy of all second-stage variables, which will define the second-stage reaction to the new scenario. In this method, an increasing lower bound of Problem \eqref{eq:TwoStageRobustVersion} is given by the master-problem, while the adversarial problem provides an upper bound to the problem. Therefore in each iteration, the method provides an upper bound on the optimality gap. Unfortunately, this method performs very bad for discrete optimization problems even if the uncertainty only affects the objective function \cite{kammerling2020oracle}. Since to date no appropriate alternative methods exist, algorithmic progress in the field of two-stage robustness is desired to solve Problem \eqref{eq:TwoStageRobustVersion} on realistic data sets.

\section{Computations}\label{sec:computations}
In this section, we computationally compare the BDNN to the classical DNN with the ReLU activation function. We study networks with one hidden layer of dimension $d_1$. All solution methods were implemented in Python 3.8 on an Intel(R) Core(TM) i5-4460 CPU with 3.20GHz and 8 GB RAM. The classical DNN was implemented by using the Keras API where we used the ReLU activation function on the hidden layer and the Softmax on the output layer. We used the binary cross entropy loss function. The number of epochs was set to $100$. The exact IP formulation is given in Theorem \ref{thm:MILP_formulation} and all IP formulations used in the local search heuristic were implemented in Gurobi 9.0 with standard parameter settings. The strict inequalities in the IP formulations were replaced by non-strict inequalities adding $-0.0001$ to the right-hand-side. For the IP formulations, we set a time limit (wall time) of 24 hours.

\begin{figure}[h!]
\centering
\includegraphics[scale=0.5]{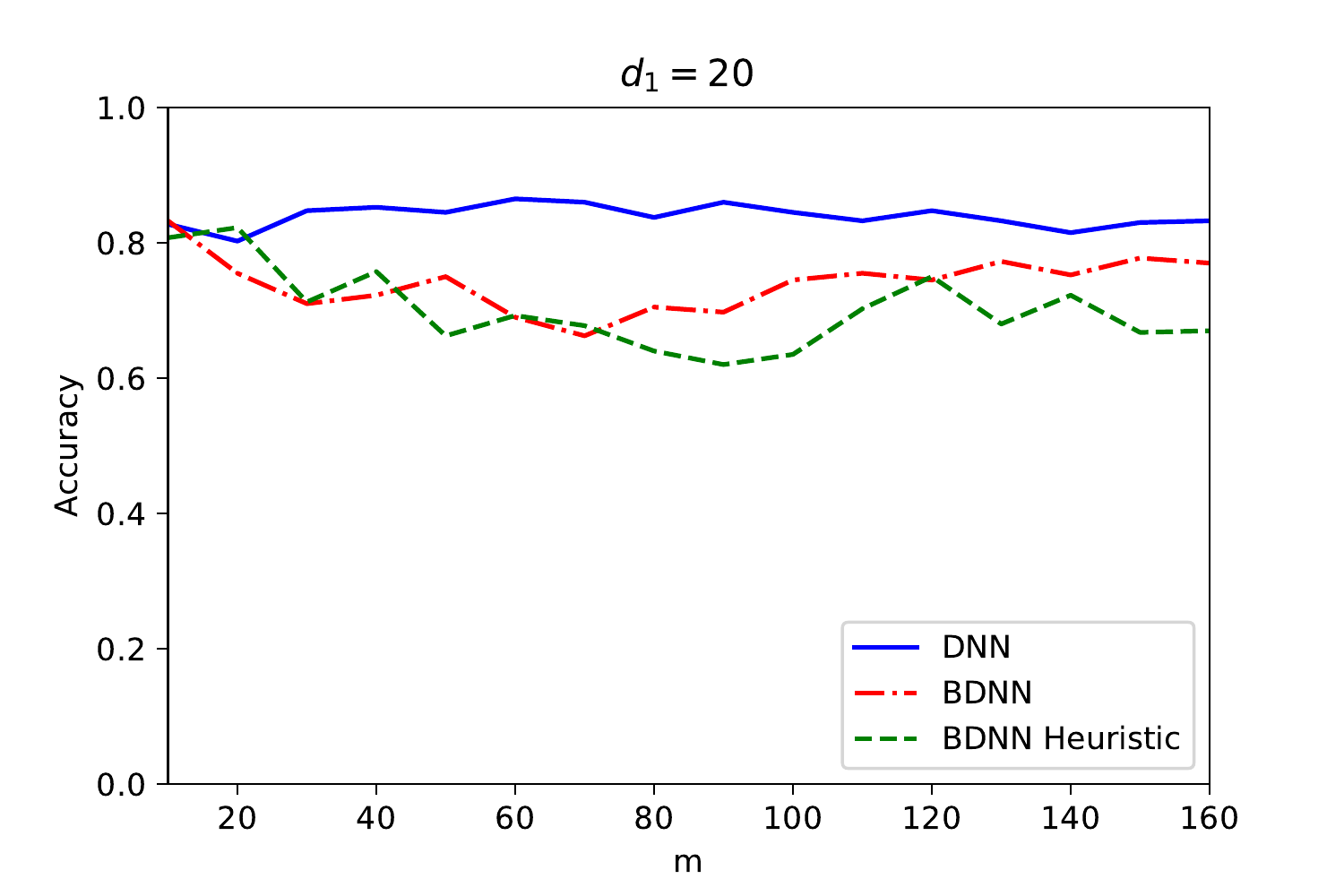}
\includegraphics[scale=0.5]{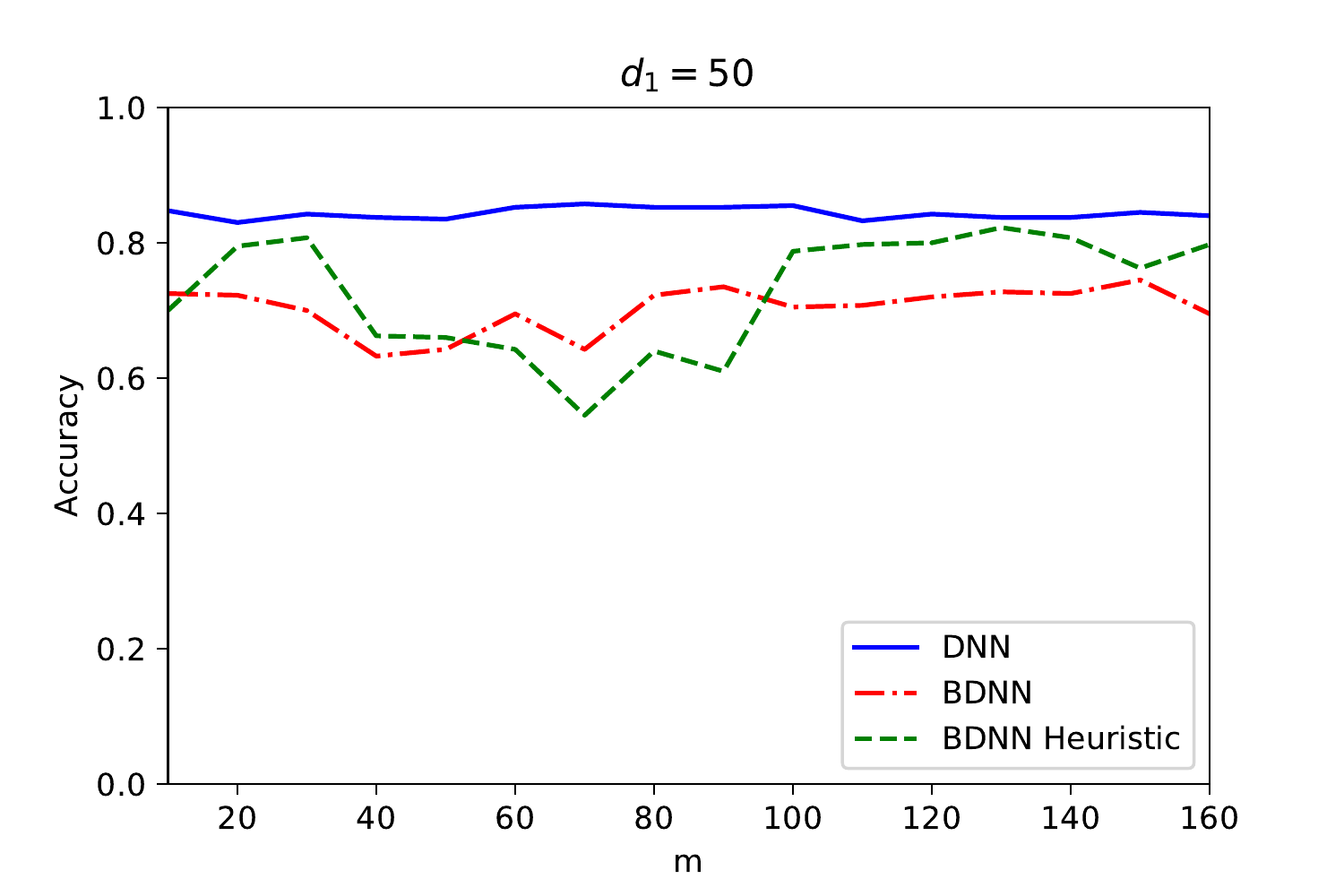}
\includegraphics[scale=0.5]{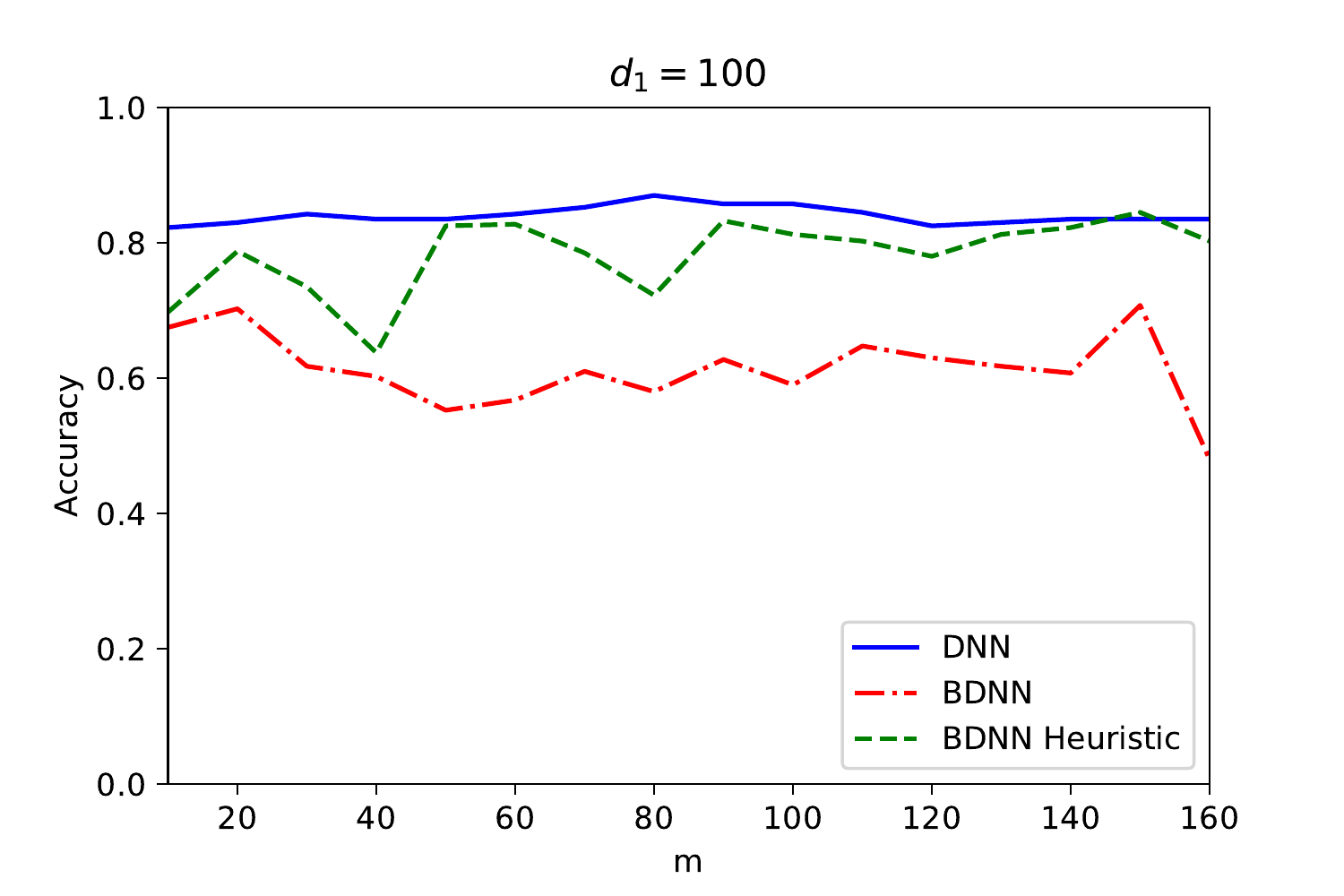}
\caption{Average accuracy over $10$ random instances for networks with one hidden layer of dimension $d_1$ trained on $m$ data points.}
\label{fig:plotsAcc}
\end{figure}

We generated $10$ random datasets in dimension $N=100$ each with $m=200$ data points. The entries of the data points were drawn from a uniform distribution with values in $[0,10]$ for one-third of the data points, having label $1$, and with values in $[-10,0]$ for the second third of the data points, having label $0$. The remaining data points were randomly drawn with entries in $[-1,1]$ and have randomly assigned labels. We split each dataset into a training set of $160$ samples and a testing set of $40$ samples. All computations were implemented for neural networks with one hidden layer of dimension $d_1\in \{ 20,50,100\}$. Figure \ref{fig:plotsAcc} shows the average classification accuracy on the testing set over all $10$ datasets achieved by the methods trained on $m$ of the training points. The results indicate that the exact and the heuristic version of the BDNN have lower accuracy than the DNN. Furthermore, the performance of both BDNN methods seem to be much more unstable and depend more on the choice of the training set. Interestingly for a hidden layer of dimension $100$, the heuristic method performs much better and can even compete with the classical DNN. In Figure \ref{fig:plotsTime} we show the runtime of all methods over $m$. Clearly, the runtime of the BDNN methods is much higher and seems to increase linearly in the number of data points. For real-world data sets with millions of data points, this method will fail using state-of-the-art solvers. Surprisingly, the runtime of the heuristic algorithm seems to be nearly the same as for the exact version, while the accuracy can be significantly better.

\begin{figure}[h!]
\centering
\includegraphics[scale=0.5]{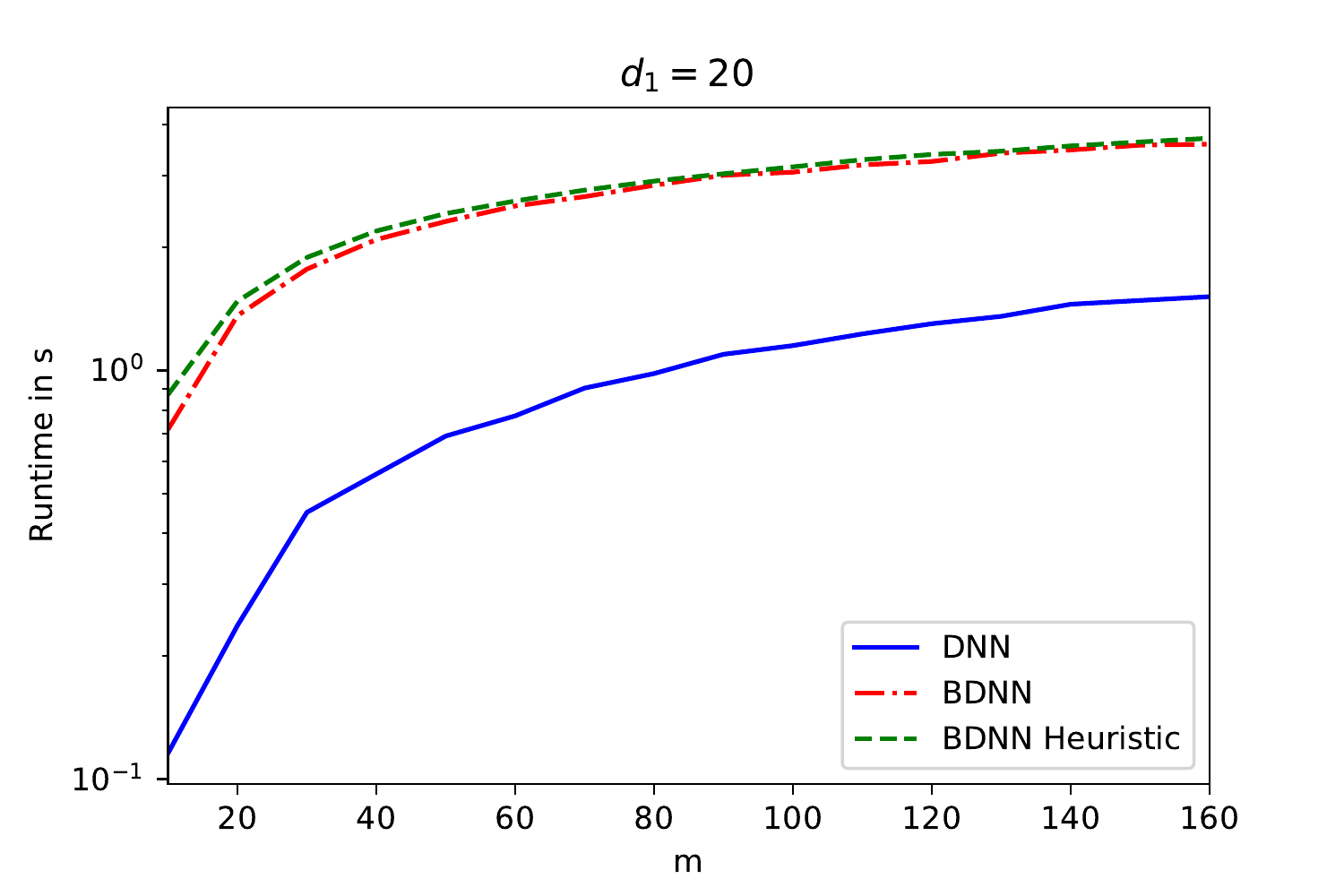}
\includegraphics[scale=0.5]{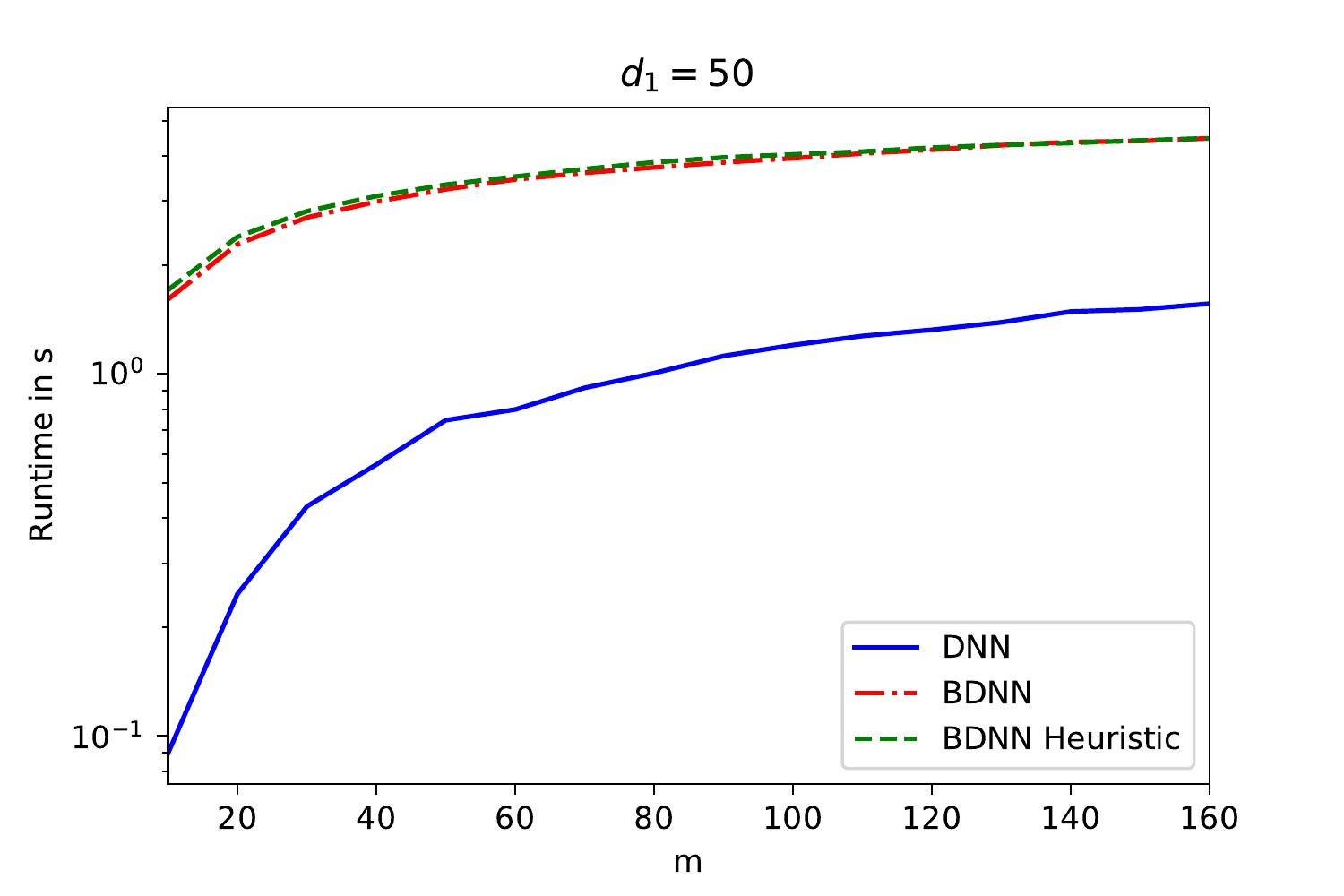}
\includegraphics[scale=0.5]{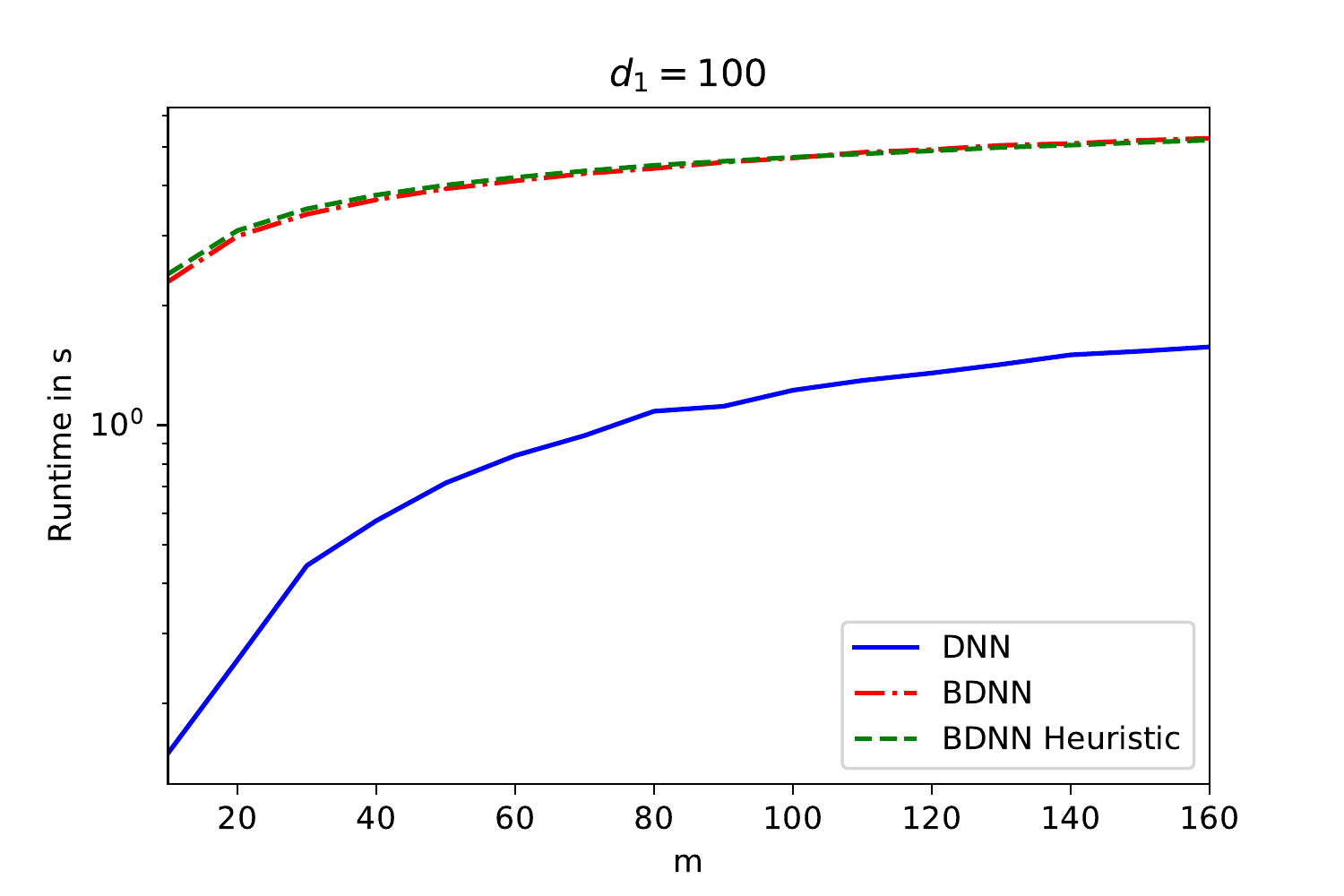}
\caption{Average runtime (logarithmic scale) over $10$ random instances for networks with one hidden layer of dimension $d_1$ trained on $m$ data points.} 
\label{fig:plotsTime}
\end{figure}

Additionally, we study all methods on the Breast Cancer Wisconsin dataset (BCW) \cite{Dua:2019}. Here we also test the BDNN version where the values $\lambda_k$ are all set to $0$ instead of being part of the variables; we indicate this version by BDNN$_0$. The dataset has $m=699$ data points with $N=9$ attributes which we split into $80\%$ training data and $20\%$ testing data.  Again all computations were implemented for neural networks with one hidden layer of dimension $d_1\in \{ 25,50\}$. In Tables \ref{tbl:breastcancerAcc} and \ref{tbl:breastcancerRest} we show the accuracy, precision, recall and F1 score of all methods on the BCW dataset for a fixed shuffle of the data returned by the scikit-learn method \textit{train\_test\_split} with the seed set to $42$. It turns out that the exact BDNN performs better if the values $\lambda_k$ are set to $0$, while the heuristic version performs better if the $\lambda_k$ are trained. The heuristic version of the BDNN has the best performance for $d_1=25$, significantly better than the DNN. For $d_1=50$ the DNN is slightly better. Nevertheless, the best accuracy of $95\%$ for the BCW dataset was achieved by the heuristic BDNN for $d_1=25$. In Table \ref{tbl:breastcancerShuffles} we compare the heuristic BDNN to the DNN on $10$ random shuffles of the BCW dataset and record the average, maximum, and minimum accuracies over all $10$ shuffles. It turns out that the heuristic BDNN outperforms the DNN with the best accuracy of $97.1\%$.

No results are reported for the robust BDNN version, presented in Problem \eqref{eq:TwoStageRobustVersion} since the column-and-constraint algorithm was not able to terminate during 24 hours on the random instances even for a network with $d_1=20$ and $20$ data points. Further future research regarding more efficient optimization methods for discrete robust two-stage problems is necessary to solve Problem \eqref{eq:TwoStageRobustVersion}. 

\begin{table}[h!]
\caption{Performance on the \textit{Breast Cancer Wisconsin} dataset.}
\label{tbl:breastcancerAcc}
\vskip 0.15in
\begin{center}
\begin{small}
\begin{sc}
\begin{tabular}{lc|cc}
\toprule
Method & $d_1$ & Acc. (\%) & Opt. Gap (\%)  \\
\midrule
BDNN    & 25 & 69.3 & 0.0 \\
BDNN$_0$ & 25 & 83.6 & 0.0\\
BDNN heur. & 25 & \textbf{95.0} & 0.0 \\
BDNN$_0$ heur. & 25 & 30.0 & 0.0 \\
DNN    & 25 & 91.4 & - \\
\hline
BDNN    & 50 & 69.3 & 0.0 \\
BDNN$_0$ & 50 & 84.3 & 0.51\\
BDNN heur. & 50 & 89.3 & 0.0 \\
BDNN$_0$ heur. & 50 & 71.4 & 0.0 \\
DNN    & 50 & \textbf{91.4} & - \\
\bottomrule
\end{tabular}
\end{sc}
\end{small}
\end{center}
\vskip -0.0in
\end{table}

\begin{table}[h!]
\caption{Performance on the \textit{Breast Cancer Wisconsin} dataset.}
\label{tbl:breastcancerRest}
\vskip 0.15in
\begin{center}
\begin{small}
\begin{sc}
\begin{tabular}{lc|ccc}
\toprule
Method & $d_1$ &  Prec. (\%) & Rec. (\%) & F1 (\%)  \\
\midrule
BDNN    & 25 & 48.0 & 69.3 & 56.7\\
BDNN$_0$ & 25 & 83.2 & 83.6 & 83.1\\
BDNN heur. & 25 & \textbf{95.0} & \textbf{95.0} & \textbf{95.0}\\
BDNN$_0$ heur. & 25 & 38.7 & 30.0 & 17.6\\
DNN    & 25 & 91.8 & 91.4 & 91.5\\
\hline
BDNN    & 50 & 63.6 & 69.3 & 58.0\\
BDNN$_0$ & 50 & 86.4 & 84.3 & 84.7\\
BDNN heur. & 50 & \textbf{91.5} & 89.3 & 89.6\\
BDNN$_0$ heur. & 50 & 74.6 & 71.4 & 72.3 \\
DNN    & 50 & 91.4 & \textbf{91.4} & \textbf{91.3}\\
\bottomrule
\end{tabular}
\end{sc}
\end{small}
\end{center}
\vskip -0.0in
\end{table}

\begin{table}[h!]
\caption{Accuracy on the \textit{Breast Cancer Wisconsin} dataset over $10$ random shuffles of the data.}
\label{tbl:breastcancerShuffles}
\vskip 0.15in
\begin{center}
\begin{small}
\begin{sc}
\begin{tabular}{lc|ccc}
\toprule
Method & $d_1$ &  Avg. (\%) & Max (\%) & Min (\%)  \\
\midrule
BDNN heur. & 25 & \textbf{93.2} & \textbf{97.1} & 85.0\\
DNN    & 25 & 89.1 & 91.4 & \textbf{85.7}\\
\bottomrule
\end{tabular}
\end{sc}
\end{small}
\end{center}
\vskip -0.0in
\end{table}

\section{Conclusion}
We show that binary deep neural networks can be modeled by mixed-integer programming formulations, which can be solved to global optimality by classical integer programming solvers. Additionally, we present a heuristic algorithm and a robust version of the model. Our tests on random and real datasets indicate that binary deep neural networks are much more unstable and computationally harder to solve than classical DNNs. Nevertheless, the heuristic algorithm outperforms the classical DNN on the Breast Cancer Wisconsin dataset. Moreover, the mixed-integer programming formulation is very adjustable to variations of the model and could give new insights into the understanding of deep neural networks. This motivates further research into the scalability of the IP method and the study of other tractable reformulations of DL algorithms.

\small
\bibliography{icmlref}
\bibliographystyle{icml2020}

\end{document}